\begin{document}

\title{On a Combination of the Cyclic Nimhoff and Subtraction Games}

\author{Tomoaki Abuku\footnote{University of Tsukuba, Ibaraki, Japan} \and Masanori Fukui\footnote{Hiroshima University, Hiroshima, Japan}\footnote{Osaka Electro-Communication University, Osaka, Japan}
\and Ko Sakai\footnote{University of Tsukuba, Ibaraki, Japan}
\and Koki Suetsugu\footnote{Kyoto University, Kyoto, Japan}\footnote{Presently with National Institute of Informatics, Tokyo,
Japan}}

\theoremstyle{definition} 
\theoremstyle{plain}
\newtheorem{thm}{Theorem}[section]
\newtheorem{Lem}{Lemma}[section]
\newtheorem{proposition}{Proposition}[section]
\newtheorem{corollary}{Corollary}[section]

\theoremstyle{defn} 
\newtheorem{conjecture}{Conjecture}[section]
\newtheorem{defn}{Definiton}[section]
\newtheorem{exam}{Example}[section]
\newtheorem{rem}{Remark}[section]
\newtheorem{pict}{Figure}[section]
\newcommand{\Nat}{\mathbb{N}}

\footnotetext[1]{{\bf Key words.} Combinatorial Game, Impartial Game, Nim, Nimhoff, Subtraction Game}\footnotetext[2]{{\bf AMS 2000 subject classifications.} 05A99, 05E99}

\date{}

\maketitle

\centerline{\bf Abstract}
\noindent
In this paper, we study a combination (called the generalized cyclic Nimhoff) of the cyclic Nimhoff and subtraction games. We give the $\mathcal{G}$-value of the game when all the $\mathcal{G}$-value sequence of subtraction games have a common $h$-stair structure.\\

\section{Introduction}
\label{intro}
\subsection{Impartial game}
This paper discusses ``impartial'' combinatorial games in normal rule, namely games with the following characters:
\begin{itemize}
\item\ Two players alternately make a move.
\item\ No chance elements (the possible moves in any given position is determined in advance).
\item\ The both players have complete knowledge of the game states.
\item\ The game terminates in finitely many moves.
\item\ The both players have the same set of the possible moves in any position. (impartial)
\item\ The player who makes the last move wins. (normal)
\end{itemize}

Throughout this paper, we suppose that all game positions are
``short'', namely there are limitedly many positions that can be reached from the initial position, and any position cannot appear twice in a play.

\begin{defn}[outcome classes]
A game position is called an $\mathcal{N}$-position (resp. a $\mathcal{P}$-position) if the first player (resp. the second player) has a winning strategy.
\end{defn}

Clearly, all impartial game positions are classified into $\mathcal{N}$-positions or $\mathcal{P}$-positions.

If $G$ is an $\mathcal{N}$-position, there exists a move from $G$ to a $\mathcal{P}$-position.
If $G$ is a $\mathcal{P}$-position, there exists no move from $G$ to a $\mathcal{P}$-position.

\subsection{Nim and $\mathcal{G}$-value}
Nim is a well-known impartial game with the following rules:
\begin{itemize}
\item\ It is played with several heaps of tokens.
\item\ The legal move is to remove any number of tokens (but at least one token) from any single heap.
\item\ The end position is the state of no heaps of tokens.
\end{itemize}

We denote by $\mathbb{N}_{0}$ the set of all nonnegative integers. 
\begin{defn}[nim-sum]
The value obtained by adding numbers in binary form without carry is called nim-sum. The nim-sum of nonnegative integers $m_{1},\ldots,m_{n}$  is written by 
\begin{align*}
m_{1}\oplus \cdots \oplus m_{n}.
\end{align*}
\end{defn}

The set $\mathbb{N}_{0}$ is isomorphic to the direct sum of countably many $\mathbb{Z}/2\mathbb{Z}$'s.

\begin{defn}[minimum excluded number]
Let $T$ be a proper subset of $\mathbb{N}_{0}$. Then $\mathrm{mex}\ $$T$ is defined to be the least nonnegative integer not contained in $T$, namely
\begin{align*}
\mathrm{mex}\ T=\mathrm{min} (\mathbb{N}_0 \setminus T).
\end{align*}
\end{defn}

\begin{defn}[$\mathcal{G}$-value]
Let $G$ and $G'$ be game positions. The notation $G \rightarrow G'$ means that $G'$ can be reached from $G$ by a single move.  
The value $\mathcal{G}(G)$ called the $\mathcal{G}$-value (or nim value or Grundy value or SG-value, depending on authors) of $G$ is defined as follows:
\begin{align*}
  \mathcal{G}(G)=\mathrm{mex} \{\mathcal{G}(G')\mid G \rightarrow G'\}. 
\end{align*}
\end{defn}

The following theorem is well-known.

\begin{thm}[\cite{Grundy}, \cite{Sprague}]
$\mathcal{G}(G)=0$ if and only if $G$ is a $\mathcal{P}$-position.
\end{thm}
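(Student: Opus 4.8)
The plan is to argue by induction on the game tree, which is well founded precisely because every position is short. The bridge between $\mathcal{G}$-values and outcome classes is a direct reading of the mex definition: $\mathcal{G}(G) = 0$ holds exactly when $0 \notin \{\mathcal{G}(G') \mid G \to G'\}$, that is, when no option $G'$ of $G$ satisfies $\mathcal{G}(G') = 0$; conversely $\mathcal{G}(G) \neq 0$ holds exactly when $0$ belongs to that set, that is, when some option of $G$ has $\mathcal{G}$-value $0$. This equivalence is immediate and will be applied at every node of the induction.

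First I would dispose of the base case. A terminal position $G$, from which no move is available, has $\mathcal{G}(G) = \mathrm{mex}\ \emptyset = 0$, and it is a $\mathcal{P}$-position because the player to move cannot move and therefore loses. Hence both sides of the claimed equivalence hold, and the base case is settled.

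For the inductive step I would assume the equivalence $\mathcal{G}(G') = 0 \Leftrightarrow G'$ is a $\mathcal{P}$-position for every option $G'$ of $G$, and establish it for $G$. Combining the mex bridge with the induction hypothesis gives: $\mathcal{G}(G) = 0$ iff no option of $G$ has $\mathcal{G}$-value $0$, iff no option of $G$ is a $\mathcal{P}$-position. Invoking the characterization recalled just before the theorem — namely that $G$ is a $\mathcal{P}$-position exactly when no move from $G$ reaches a $\mathcal{P}$-position — this last condition is equivalent to $G$ being a $\mathcal{P}$-position, which closes the induction.

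I expect the point requiring care to be the correct use of the outcome-class characterization rather than any computation. The two facts stated in the excerpt supply only one implication apiece; to obtain the needed biconditional ``$G$ is a $\mathcal{P}$-position iff no move leads to a $\mathcal{P}$-position'' I must combine them with the dichotomy that every position is either an $\mathcal{N}$-position or a $\mathcal{P}$-position. Concretely, the contrapositive of ``every $\mathcal{N}$-position admits a move to a $\mathcal{P}$-position'' reads ``if no move reaches a $\mathcal{P}$-position then $G$ is not an $\mathcal{N}$-position,'' and by the dichotomy $G$ is then a $\mathcal{P}$-position; the converse is the second stated fact. Once this biconditional is in hand the induction is routine.
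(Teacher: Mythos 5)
Your proof is correct and complete: the paper itself gives no proof of this theorem, citing Grundy and Sprague instead, and your induction on the (well-founded, by shortness) game tree is exactly the standard argument found in those sources. You were right to flag the one delicate point — upgrading the two one-directional facts about outcome classes to the biconditional ``$\mathcal{P}$-position iff no option is a $\mathcal{P}$-position'' via the $\mathcal{N}$/$\mathcal{P}$ dichotomy — and you handled it correctly.
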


Therefore, we only need to decide the $\mathcal{G}$-value of positions for winning strategy in impartial games and the $\mathcal{G}$-value is also useful for analysis of the disjunctive sum of games. If $G$ and $H$ are any positions of (possibly different) impartial games, the disjunctive sum of $G$ and $H$ (written as $G + H$) is defined as follows: each player must make a move in either $G$ or $H$ (but not both) on his turn. 
\begin{thm}[\cite{Grundy}, \cite{Sprague}]
Let $G$ and $H$ be two game positions. Then
\begin{align*}
\mathcal{G}(G+H)=\mathcal{G}(G)\oplus \mathcal{G}(H).
\end{align*}
\end{thm}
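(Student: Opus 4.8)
The plan is to prove the identity by well-founded (structural) induction on the pair of positions $(G,H)$; since every play is short, the relation ``is reachable by a single move'' is well-founded, so this induction is legitimate. Write $a=\mathcal{G}(G)$ and $b=\mathcal{G}(H)$. Every option of $G+H$ has the form $G'+H$ with $G\rightarrow G'$ or $G+H'$ with $H\rightarrow H'$, so by the definition of the $\mathcal{G}$-value,
\begin{align*}
\mathcal{G}(G+H)=\mathrm{mex}\{\mathcal{G}(G'+H),\ \mathcal{G}(G+H')\mid G\rightarrow G',\ H\rightarrow H'\}.
\end{align*}
The induction hypothesis lets me rewrite each term as $\mathcal{G}(G'+H)=\mathcal{G}(G')\oplus b$ and $\mathcal{G}(G+H')=a\oplus\mathcal{G}(H')$. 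Hence it suffices to show that the mex of the set $S=\{\mathcal{G}(G')\oplus b\mid G\rightarrow G'\}\cup\{a\oplus\mathcal{G}(H')\mid H\rightarrow H'\}$ equals $a\oplus b$.

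By the definition of mex, this splits into two claims: (i) $a\oplus b\notin S$, and (ii) every nonnegative integer $c<a\oplus b$ lies in $S$. For (i) I would argue by contradiction. If $\mathcal{G}(G')\oplus b=a\oplus b$ for some option $G'$, then cancelling $b$ (using that $\oplus$ makes $\mathbb{N}_0$ a group of exponent two) gives $\mathcal{G}(G')=a=\mathcal{G}(G)$; but $\mathcal{G}(G)$ is by definition the mex of the values of $G$'s options, so no option of $G$ can carry the value $a$, a contradiction. The symmetric computation rules out $a\oplus\mathcal{G}(H')=a\oplus b$.

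Claim (ii) is the substantive step, and the part I expect to be the main obstacle, since it requires a bit-level argument rather than a formal cancellation. Given $c<a\oplus b$, let $k$ be the highest binary position in which $c$ and $a\oplus b$ differ; because $c<a\oplus b$, bit $k$ of $a\oplus b$ is $1$ while bit $k$ of $c$ is $0$. Since bit $k$ of $a\oplus b$ is $1$, exactly one of $a,b$ has bit $k$ set, and without loss of generality it is $a$. Setting $a'=c\oplus b$, I would verify that $a'<a$: the numbers $a'=c\oplus b$ and $a=(a\oplus b)\oplus b$ agree above bit $k$ (since $c$ and $a\oplus b$ do, and xor-ing with $b$ preserves this), whereas at bit $k$ we get bit $k$ of $a'$ equal to $0$ but bit $k$ of $a$ equal to $1$. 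Because $a'<a=\mathcal{G}(G)$ and $\mathcal{G}(G)$ is the mex of the option values, every value below $a$—in particular $a'$—is attained, so there is an option $G'$ with $\mathcal{G}(G')=a'$. Then the induction hypothesis gives $\mathcal{G}(G'+H)=a'\oplus b=(c\oplus b)\oplus b=c$, so $c\in S$; if bit $k$ had instead been set in $b$, the mirror argument produces a suitable $H'$. Combining (i) and (ii) yields $\mathrm{mex}(S)=a\oplus b$, which closes the induction.
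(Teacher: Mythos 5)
Your proof is correct, and there is nothing in the paper to compare it against: the paper states this theorem as classical background, citing Grundy and Sprague, and gives no proof of its own. Your argument is the standard one --- induction on the (well-founded) move relation, reduction of $\mathcal{G}(G+H)$ to $\mathrm{mex}\,S$ with $S=\{\mathcal{G}(G')\oplus b\}\cup\{a\oplus \mathcal{G}(H')\}$, exclusion of $a\oplus b$ by cancellation, and attainment of every $c<a\oplus b$ via the highest differing bit. All steps check out, including the delicate one: from bit $k$ of $a\oplus b$ being $1$ and bit $k$ of $c$ being $0$, exactly one of $a,b$ carries bit $k$, and $a'=c\oplus b$ then agrees with $a$ above bit $k$ and drops it at bit $k$, so $a'<a$ and the mex property of $\mathcal{G}(G)$ supplies the required option $G'$; the base case (no options, so $a=b=0$ and $\mathrm{mex}\,\emptyset=0$) is covered vacuously by your two claims. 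For completeness, the other classical route proves instead that $G+H+{*}(a\oplus b)$ is a $\mathcal{P}$-position via a pairing strategy; your mex computation is the more self-contained of the two, since it never invokes the outcome-class characterization.
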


\if0
\begin{thm}[\cite{Grundy}, \cite{Sprague}]
The $\mathcal{G}$-value of Nim position $(m_1,\ldots,m_n)$ is the following:
\begin{align*}
\mathcal{G}(m_1,\ldots,m_n)=m_1\oplus \cdots \oplus m_n.
\end{align*}
\end{thm}
\fi

\begin{defn}[periodic]
Let $a=(a_0, a_1, a_2, \ldots)$ be a sequence of integers. We say that $a$ is periodic with period $p$ and preperiod $n_0$, if we have 
\begin{center}
$a_{n+p}=a_n$ for all $n\geq n_0$.
\end{center}
We say that $a$ is purely periodic if it is periodic with preperiod $0$.
\end{defn}

\begin{defn}[arithmetic periodic]
Let $a=(a_0, a_1, a_2, \ldots)$ be a sequence of integers. We say that $a$ is arithmetic periodic with period $p$, preperiod $n_0$, and saltus $s$, if we have 
\begin{center}
$a_{n+p}=a_n+s$ for all $n\geq n_0$.
\end{center}
\end{defn}

\begin{defn}[$\mathcal{G}$-value sequence of a heap game]
Assume $H$ is a heap game and let $\mathcal{G}_{H}(m)$ be the $\mathcal{G}$-value of a single heap with $m$ tokens. The we call sequence
\begin{align*}
\mathcal{G}_{H}(0), \mathcal{G}_{H}(1), \ldots
\end{align*}
the $\mathcal{G}$-value sequence of $H$.
\end{defn}

Shortly after Bouton published studies on Nim \cite{Bouton}, Wythoff conducted research on $\mathcal{P}$-position
of a game which is nowadays called Wythoff's Nim \cite{Wythoff}. 
Wythoff's Nim is a well-known impartial game with the following rules:
\begin{itemize}
\item\ The legal move is to remove any number of tokens from a single heap (as in Nim) or remove the same number of tokens from both heaps.
\item\ The end position is the state of no heaps of tokens.
\end{itemize}

\if0
The $\mathcal{P}$-positions are as follows: let $(m,n)$
$(m\leq n)$ be a position of Wythoff's Nim. For $n-m=k$, the
$\mathcal{P}$-positions of Wythoff's Nim are given by $m=\lfloor k\Phi
\rfloor$, $n= \lfloor k\Phi \rfloor +k$, where $\Phi$ is the golden
ratio, i.e. $\Phi=\frac{1+\sqrt{5}}{2}$.
\fi

Wythoff's work was one
of the earliest researches on heap games 
which permit the players to remove tokens from
more then one heap at the same time. Another early research was by
Moore \cite{Moore}. Let $k$ be a fixed given number. In Moore's game the player can remove tokens from
less then $k$ heaps at the same time without any restriction.

\if0
Connell considered Wythoff-type games in which the players can only
remove a multiple of $k$ tokens from a single heap
and can remove any (but the same) numbers of tokens
from the both heaps \cite{Con59}. 
He characterized $\mathcal{P}$-position of the game.

Holladay considered various versions of Wythoff's Nim and characterized their $\mathcal{P}$-positions \cite{Hol68}. The game of
type $E$ in his paper is the 2-heap version of the cyclic Nimhoff, whose
$\mathcal{G}$-values will be explained in the next subsection. The 2-heap version of the cyclic Nimhoff is studied in \cite{DASR09} as well.
\fi

\subsection{The Cyclic Nimhoff}
Nimhoff was extensively
researched by Fraenkel and Lorberbom \cite{Fraenkel}. 
Let $R$ be a subset of $\mathbb{N}_0^n$ not
containing $(0,\dots,0)$. A position of Nimhoff is $m$-heaps of tokens. The
moves are of two types: each player can remove any positive number of
tokens from a single heap, or remove $s_i$ tokens from
the $i$th heap for $i=1,\dots,n$ such that $(s_1,\ldots,s_n)\in R$.
In particular they researched the cyclic Nimhoff
in the case that $R=\{(s_1,\ldots,s_n) \mid 0<\displaystyle \sum_{i=1}^{n} s_i<h\}$, where
$h$ is a fixed positive integer \cite{Fraenkel}. The $\mathcal{G}$-value of position
$(x_1,x_2,\ldots,x_n)$ in the cyclic Nimhoff is
$$
\mathcal{G}(x_1,x_2,\ldots,x_n) = \left(\displaystyle \bigoplus_i^{} \left\lfloor \frac{x_i}{h}\right\rfloor\right)h + \left(\displaystyle \sum_i^{} x_i \right)\bmod h,
$$
where $\displaystyle \bigoplus_i^{}{a_i}$ denotes the nim-sum of all $a_i$'s.

\subsection{The Subtraction Games}
Let $S$ be a set of positive integers.
In the subtraction game $\mathrm{Subtraction}(S)$, the only legal moves
are to remove $s$ tokens from a heap for some $s\in S$.
In particular, Nim is $\mathrm{Subtraction}(\Nat_+)$, where $\mathbb{N}_{+}$ is the set of all positive integers.
There are a lot of preceding studies on subtraction games \cite{Berlekamp}. 
\if0
For example，in the case $S=\{s_1,s_2\}$ or $S=\{s_1,s_2,s_1+s_2\}$
a study was done, that is they studied about period of the $\mathcal{G}$-
\fi
For example，All-but subtraction games All-but$(S)$
(i.e. $\mathrm{Subtraction}(\Nat_+\setminus S)$ such that $S$ is a 
finite set) were studied in detail by 
Angela Siegel \cite{ASiegel}. She proved that 
the $\mathcal{G}$-value sequence
is arithmetic periodic and characterized some cases in which the sequence is purely periodic. 

\section{The Generalized Cyclic Nimhoff}

We define the generalized cyclic Nimhoff as a combination of
the cyclic Nimhoff and subtraction games as follows.

\begin{defn}[generalized cyclic Nimhoff]

Let $h$ be a fixed positive integer and $S_1, S_2, \ldots
,S_n$ are sets of positive integers. Let $(x_1,x_2,\ldots,x_n)$ be an
ordered $n$-tuple of non-negative integers. We define subsets $X_1$, $X_2$,
$\cdots$, $X_n$, $Y$ of the set of $n$-tuples of non-negative
integers as follows:
\begin{align*}
X_1&=\{(x_1-s_1,x_2,\ldots,x_n)\mid s_1 \in
S_1\}\\ X_2&=\{(x_1,x_2-s_2,\ldots,x_n)\mid s_2 \in
S_2\}\\ &\vdots\\ X_n&=\{(x_1,x_2,\ldots,x_n-s_n)\mid s_n \in
S_n\}\\ Y&=\{(x_1-s_1,x_2-s_2,\ldots,x_n-s_n)\mid 0<\displaystyle \sum_{i=1}^{n} s_i< h\}.
\end{align*}
In the generalized cyclic Nimhoff GCN$(h; S_1,S_2,\ldots,S_n)$, the set of legal moves from
position $(x_1,x_2,\ldots,x_n)$ is $X_1\cup X_2\cup\cdots\cup X_n\cup
Y$.
\end{defn}

\begin{defn}
Let $a=\{a(x)\}^{\infty}_{x=0}$ be an arbitrary sequence of
non-negative integers. The $h$-stair $b=\{b(x)\}^{\infty}_{x=0}$
of $a$ is defined by the following:
\begin{align*}
b(xh+r)=a(x)h+r
\end{align*}
for all $x\in \mathbb{N}$ and for all $r=0,1,\cdots,h-1$.
\end{defn}

\begin{exam}
If $a=0, 0, 1, 5, 4, \ldots$, then the $3$-stair of $a$ is\\ $b=0,1,2,0,1,2,3,4,5,15,16,17,12,13,14,\ldots$.
\end{exam}

Let us denote the $\mathcal{G}$-value sequence of 
$\mathrm{Subtraction}(S)$ by $\{G_{S}(x)\}_{x=0}^{\infty}$.

\begin{thm}\label{generalizednimhoff}
Let $a_1,a_2, \ldots, a_n$ be arbitrary sequences of non-negative
integers. Let $(x_1,x_2, ... , x_n)$ be a game position of the generalized cyclic Nimhoff GCN$(h; S_1,S_2,\ldots,S_n)$.
If $\{G_{S_i}(x)\}$ is the $h$-stair of sequence $a_i$ for all
$i$ $(1\leq i \leq n)$, then
\begin{align*}
\mathcal{G}(x_1,x_2,\ldots,x_n) 
= \left(\displaystyle \bigoplus_i^{}\left\lfloor \frac{G_{S_i}(x_i)}{h}
\right\rfloor\right)h+\left(\displaystyle \sum_i^{} x_i \right)\bmod h.
\end{align*}
\end{thm}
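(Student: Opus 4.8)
The plan is to prove the formula by induction on the total number of tokens $x_1+\cdots+x_n$, since every legal move strictly decreases this quantity; by the Sprague--Grundy theory it then suffices to show that the claimed value equals the $\mathrm{mex}$ of the claimed values of all options. Throughout I write $q_i=\lfloor x_i/h\rfloor$ and $r_i=x_i\bmod h$, so that the $h$-stair hypothesis gives $G_{S_i}(x_i)=a_i(q_i)h+r_i$ and hence $\lfloor G_{S_i}(x_i)/h\rfloor=a_i(q_i)$. Abbreviating the target value as $Ah+\rho$ with $A=\bigoplus_i a_i(q_i)$ and $\rho=(\sum_i x_i)\bmod h=(\sum_i r_i)\bmod h$, I must establish two things: first, that no option has claimed value exactly $Ah+\rho$; and second, that every integer $v$ with $0\le v<Ah+\rho$ is the claimed value of some option.

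For the first point I consider the two move types. A $Y$-move removes a total of $t=\sum_i s_i$ tokens with $0<t<h$, so the new residue is $(\rho-t)\bmod h\neq\rho$, and its claimed value already differs from $Ah+\rho$ modulo $h$. An $X_i$-move replacing $x_i$ by $x_i'=x_i-s_i$ leaves the residue equal to $(\rho-s_i)\bmod h$ and alters the coarse part only through its $i$th summand $a_i(q_i')=\lfloor G_{S_i}(x_i')/h\rfloor$. If such a move preserved both parts, then $s_i\equiv 0\pmod h$ (so $x_i'\bmod h=r_i$) and $a_i(q_i')=a_i(q_i)$, which by the $h$-stair form forces $G_{S_i}(x_i')=a_i(q_i')h+r_i=a_i(q_i)h+r_i=G_{S_i}(x_i)$; but $x_i\to x_i'$ is a move in $\mathrm{Subtraction}(S_i)$, contradicting the $\mathrm{mex}$ definition of $G_{S_i}(x_i)$.

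For the second point, write $v=Bh+\sigma$ with $0\le\sigma<h$; then either $B=A$ and $\sigma<\rho$, or $B<A$. In the first case I use a $Y$-move: set $t=\rho-\sigma\in\{1,\dots,\rho\}$ and subtract amounts $s_i\le r_i$ with $\sum_i s_i=t$, which is possible since $\sum_i r_i\ge\rho\ge t$; no block boundary is crossed, so every $q_i$ and hence $A$ is unchanged, while the residue becomes $(\rho-t)\bmod h=\sigma$. In the second case I invoke the standard Nim reduction to choose an index $i$ and a coarse digit $c=a_i(q_i)\oplus(A\oplus B)<a_i(q_i)$ with $\bigoplus_{j\neq i}a_j(q_j)\oplus c=B$, and then realize it by a single $X_i$-move.

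The step I expect to be the main obstacle is this last realization, because one $X_i$-move must simultaneously hit the prescribed coarse digit $c$ and the prescribed residue $\sigma$, yet these are coupled through the single removed amount $s_i$. Here the $h$-stair structure is decisive: since $c<a_i(q_i)$, the entire block of values $ch,ch+1,\dots,ch+h-1$ lies strictly below $G_{S_i}(x_i)=a_i(q_i)h+r_i$, so by the $\mathrm{mex}$ property each of them is attained as $G_{S_i}(x_i')$ for some subtraction-game option $x_i'$; selecting the one with $G_{S_i}(x_i')=ch+\ell$ makes the new $i$th remainder equal to $\ell$, and taking $\ell=(\sigma-\sum_{j\neq i}r_j)\bmod h$ yields coarse part $B$ and residue $\sigma$ at once. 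This freedom to set the remainder arbitrarily while fixing the high digit is precisely what the common $h$-stair hypothesis provides, and carefully verifying it is the heart of the argument. The base case $(0,\dots,0)$ is immediate, once one notes that $G_{S_i}(0)=0$ forces $a_i(0)=0$, so that the claimed value there is $0$.
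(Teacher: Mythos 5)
Your proposal is correct and follows essentially the same route as the paper's proof: the same decomposition of the claimed value into a coarse part $\left(\bigoplus_i a_i(q_i)\right)h$ plus a residue, the same two-case mex verification ($Y$-moves adjusting only the residue within a block, and a single $X_i$-move realizing a prescribed smaller nim-digit $c$ together with an arbitrary remainder $\ell$ via the mex property of $G_{S_i}$ and the $h$-stair structure), and the same argument that no move preserves the value because a subtraction-game move cannot preserve $G_{S_i}$. The only differences are cosmetic: you induct on the token total rather than on positions and make the base case explicit.
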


\begin{proof}

For each $i=1,\dots,n$, let $x_i =  q_i h + r_i$ where $0 \leq r_i < h$.
Since
$G_{S_i}$ is the $h$-stair of sequence $a_i$, $G_{S_i}(x_i)$ =
$a_i(q_i)h + r_i$.  
In other words, note that 
$$
\left\lfloor \dfrac{x_i}{h}\right\rfloor = q_i,\quad 
\left\lfloor \dfrac{G_{S_i}(x_i)}{h}\right\rfloor = a_i(q_i),\quad
x_i\equiv G_{S_i}(x_i)\equiv r_i\pmod h.
$$

The proof is by induction on $(x_1,x_2, \ldots, x_n)$.

Let
\begin{align*}
Q(x_1,x_2, \ldots, x_n) &=
\displaystyle \bigoplus_i^{}\left\lfloor \frac{G_{S_i}(x_i)}{h} \right\rfloor
= \displaystyle \bigoplus_i^{}{a_i}(q_i),\\
R(x_1,x_2,\ldots,x_n)&=\displaystyle \sum_i^{} x_i \bmod h =
\displaystyle \sum_i^{} G_{S_i}(x_i) \bmod h =
\displaystyle \sum_i^{} r_i \bmod h.
\end{align*}
Then, it is sufficient to prove that
\begin{align*}
\mathcal{G}(x_1,x_2,\ldots,x_n)= Q(x_1,x_2, \ldots, x_n)h+R(x_1,x_2,\ldots,x_n).
\end{align*}

First, we show that for any $k < Q(x_1,x_2, \ldots, x_n)h
+R(x_1,x_2,\ldots,x_n)$, there exists a position $(x'_1,x'_2, \ldots, x'_n) \in
X_1\cup X_2\cup\cdots\cup X_n\cup Y$ such that
$\mathcal{G}(x'_1,x'_2,\ldots,x'_n) =k$. There are two cases.

Case that $Q(x_1,x_2, \ldots, x_n)h \leq
k < Q(x_1,x_2, \ldots, x_n)h+R(x_1,x_2,\ldots,x_n)$:

In this case, $k$ can be written in form $Q(x_1,x_2, \ldots, x_n)h +
k'$ by $k'$ such that $0 \leq k' < R(x_1,x_2,\ldots,x_n)$.  Since $0 <
R(x_1,x_2,\ldots,x_n) - k' \leq R(x_1,x_2,\ldots,x_n) = 
\sum_i^{} r_i \bmod h$ and $0 < R(x_1,x_2,\ldots,x_n) - k' < h$,
there exist $(k_1,k_2,\ldots, k_n)$ such that $k_1 + k_2 + \cdots +
k_n= R(x_1,x_2,\ldots,x_n) - k'$ and $k_j \leq r_j$ for each $j$. Then
$(x_1 - k_1, x_2-k_2, \ldots x_n-k_n) \in Y$.  In addition, $Q(x_1-k_1,x_2-k_2,\ldots,x_n-k_n) = Q(x_1,x_2,\ldots,x_n)$ and $R(x_1-k_1,x_2-k_2,\ldots, x_n-k_n) = R(x_1,x_2,\ldots,x_n) - (k_1 + k_2 + \cdots + k_n) = k'$. Therefore,
$\mathcal{G}(x_1-k_1,x_2-k_2,\ldots,x_n-k_n) = Q(x_1,x_2,\ldots,x_n)h
+ k' = k$ from induction hypothesis.

Case that $k<Q(x_1,x_2,\ldots,x_n)h$:

In this case, $k$ can be written in form $Q'h+k'$ by $Q'$ and $k'$
such that $Q' < Q(x_1,x_2,\ldots,x_n) =
\bigoplus_i^{}{a_i}(q_i)$ and $0 \leq k' <h$.

According to the nature of nim-sum, there exists $j$ and $g$ which
satisfy $Q' = a_1(q_1) \oplus a_2(q_2) \oplus \cdots \oplus
a_{j-1}(q_{j-1}) \oplus g \oplus a_{j+1}(q_{j+1})\oplus \cdots
\oplus a_n(q_n)$ and $g < a_j(q_j)$.  Without loss of generality,
we assume $j=1$. That is, there exist $g<{a_1}(q_1)$ which satisfies
$Q' = g \oplus {a_2}(q_2) \oplus \cdots \oplus {a_n}(q_n) $.  
If we put $r'_1$ to satisfy that $(r'_1 + r_2 + r_3
+ \cdots + r_n) \bmod h = k'$ and $0 \leq r'_1 < h$,
then $gh+r_1'<a_1(q_1)h\leq a_1(q_1)h+r_1=G_{S_1}(x_1)$, and therefore, 
there exists $x'_1$ such that $G_{S_1}(x_1')=gh+r_1'$ and $x_1-x'_1 \in S_1$. 
Thus, we have $(x'_1,x_2,\ldots, x_n) \in X_1$. Therefore,
\begin{align*}
\mathcal{G}(x'_1,x_2,\ldots,x_n) &=\left(\left\lfloor\dfrac{G_{S_1}(x_1')}{h} \right\rfloor
\oplus\left\lfloor\dfrac{G_{S_2}(x_2)}{h} \right\rfloor\oplus \cdots
\oplus\left\lfloor\dfrac{G_{S_n}(x_n)}{h} \right\rfloor\right)h\\
  &\quad +(x'_1 + x_2 + \cdots + x_n)\bmod h\\
&=(g \oplus {a_{2}}(q_{2})\oplus \cdots \oplus {a_n}(q_n))h+k'=Q'h+k'= k
\end{align*}
from induction hypothesis.

Next, we show that, if 
$(x_1,x_2,\ldots,x_n)$ $\rightarrow$ $(x'_1,x'_2,\ldots,x'_n)$,
then  
$$
Q(x_1,x_2,\ldots,x_n)h + R(x_1,x_2,\ldots,x_n)\ne
Q(x'_1,x'_2,\ldots,x'_n)h + R(x'_1,x'_2,\ldots,x'_n).
$$ 
Cleary, the claim is true if $(x'_1,x'_2,\ldots,x'_n)$ is in $Y$,
since $R(x'_1,x'_2,\ldots,x'_n) \neq R(x_1,x_2,\ldots,x_n)$.
Therefore, we assume that
$(x'_1,x'_2,\ldots,x'_n)$ is in $X_1$ without loss of generality, 
namely $x'_j= x_j$ $(j>1)$ and
$x_1-x'_1 \in S_{1}$. Let $x'_1 = q'_1 h + r'_1\ (0\leq r'_1<h)$. If
$Q(x_1,x_2,\ldots,x_n)h + R(x_1,x_2,\ldots,x_n) =
Q(x'_1,x_2,\ldots,x_n)h + R(x'_1,x_2,\ldots,x_n)$, then we have
$Q(x_1,x_2,\ldots,x_n) = Q(x'_1,x_2,\ldots,x_n)$ and
$R(x_1,x_2,\ldots,x_n) = R(x'_1,x_2,\ldots,x_n)$. Then, $r_1 = r'_1$
since $R(x_1,x_2,\ldots,x_n) = R(x'_1,x_2,\ldots,x_n)$, and
$\lfloor{G_{S_1}(x_1)}/{h}\rfloor =\lfloor{G_{S_1}(x_1')}/{h}
\rfloor$ since $Q(x_1,x_2,\ldots,x_n) =
Q(x'_1,x_2,\ldots,x_n)$. Therefore $G_{S_1}(x_1)=G_{S_1}(x_1')$, but it is
impossible because $x_1-x'_1 \in S_{1}$.
\end{proof}

There are a variety of subtraction games with the 
$h$-stair of a simple integer sequence
as their $\mathcal{G}$-value sequence.

\begin{exam}[Nim]

For any $h$, $G_{\mathbb{N}_{+}}(x) = x = \left(\left\lfloor \frac{x}{h} \right\rfloor\right)h+(x \bmod h)$.
\end{exam}

\begin{exam}[Subtraction($\{1,\ldots,l-1\}$) and its variants]

If $\{1,\ldots,l-1\} \subset S \subset \mathbb{N}_+\setminus\{kl\mid l\in \mathbb{N}_+\}$ and $h\mid l$, then
\begin{align*}
G_{S}(x) = x\bmod l = \left(\left\lfloor \frac{x \bmod l}{h}
\right\rfloor\right)h+((x \bmod l)\bmod h).
\end{align*}
\end{exam}

\begin{exam}[All-but($\{h,2h,\ldots,kh\}$)]

If $S=\mathbb{N}_{+}\setminus \{h,2h,\ldots,kh\}$, then $G_{S}(x)$ is the $h$-stair of 

$\{\underbrace{0,0,\ldots,0}_{k+1}, \underbrace{1,1,\ldots,1}_{k+1},\underbrace{2,2,\ldots,2}_{k+1}\ldots\}$.
\end{exam}

\begin{exam}[All-but($\{s_1,s_2\}$) \cite{ASiegel}]

If $s_2>s_1$, then $G_{S}(x)$ is the $s_1$-stair of a sequence of positive integers.
\end{exam}

Theorem \ref{generalizednimhoff} allows us to combine several subtraction games which have $\mathcal{G}$-value sequences of form $h$-stair for common $h$.
For example, for GCN($4$; $\mathbb{N}_{+}$, $\{1,2,3,4,5,6,7\}$, $\mathbb{N}_{+}\setminus \{4,8\}$),
we have the following:
\begin{align*}
\mathcal{G}(x_1,x_2,x_3) 
=\left(\left\lfloor \frac{x_1}{4}\right\rfloor\oplus\left\lfloor \frac{x_2 \bmod 8}{4}\right\rfloor\oplus\left\lfloor \frac{x_3}{12}
\right\rfloor\right)\times4+(x_1+x_2+x_3) \bmod 4.
\end{align*}

Suppose that a subtraction set $S$ is given. Then we can define a new subtraction set $S'$ such that the $\mathcal{G}$-value sequence of $\mathrm{Subtraction}(S')$ is the $h$-stair of the $\mathcal{G}$-value sequence of $\mathrm{Subtraction}(S)$.

\begin{thm}
Let $S$ be an arbitrary subtraction set and let $S'=\mathbb{N}_{+}\setminus \{(\mathbb{N}_{+}\setminus S)h\}$. Then 
\begin{align*}
G_{S'}(n) =G_{S}\left(\left\lfloor \frac{n}{h} \right\rfloor\right)h+(n \bmod h).
\end{align*}
\end{thm}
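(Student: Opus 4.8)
The plan is to prove the identity by strong induction on $n$, showing directly that the $h$-stair value $b(n) := G_{S}(\lfloor n/h\rfloor)h + (n\bmod h)$ satisfies the mex recursion defining $G_{S'}(n)$. Writing $n = qh + r$ with $0\le r<h$, so that $b(n) = G_S(q)h + r$, the induction hypothesis gives $G_{S'}(n-s') = b(n-s')$ for every legal move $s'$, and hence it suffices to show that $b(n) = \mathrm{mex}\{\,b(n-s') : s'\in S',\ 1\le s'\le n\,\}$. The first thing I would record is the explicit description of $S'$: since $S' = \mathbb{N}_{+}\setminus\{mh : m\in\mathbb{N}_{+}\setminus S\}$, a positive integer $s'$ lies in $S'$ if and only if $h\nmid s'$, or $h\mid s'$ and $s'/h\in S$. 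Thus the moves of $\mathrm{Subtraction}(S')$ are exactly: remove any amount not divisible by $h$, or remove $mh$ for some $m\in S$.

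Verifying the mex then splits into the two usual halves. For the claim that $b(n)$ itself is not an option value, suppose $b(n-s') = b(n)$ for some move $s'$. Comparing the two sides modulo $h$ forces the remainders to agree, so $s'$ is a multiple of $h$, say $s' = mh$; since $s'\in S'$ this means $m\in S$. Then $n-s' = (q-m)h + r$, and equality of the $h$-quotients gives $G_S(q-m) = G_S(q)$. But $m\in S$ makes $q\to q-m$ a legal move in $\mathrm{Subtraction}(S)$, so $G_S(q-m)\ne G_S(q)$, a contradiction.

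For the claim that every $k<b(n)$ occurs as an option value, write $k = Qh + \rho$ with $0\le\rho<h$; one checks $Q\le G_S(q)$. If $Q = G_S(q)$ then necessarily $\rho<r$, and removing the non-multiple $s' = r-\rho$ (which satisfies $1\le s'<h$, hence $s'\in S'$) lands on the position $qh+\rho$, whose $h$-stair value is $G_S(q)h+\rho = k$. If $Q<G_S(q) = \mathrm{mex}\{G_S(q-m): m\in S,\ m\le q\}$, then $Q$ is attained, so there is $m\in S$ with $m\le q$ and $G_S(q-m) = Q$; the goal is to reach the position $(q-m)h+\rho$, which requires removing $s' = mh + r - \rho$, and the induction hypothesis will then give $b(n-s') = G_S(q-m)h+\rho = k$.

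The step I expect to be the main obstacle is precisely the bookkeeping for this last subtraction amount $s' = mh + (r-\rho)$, because $r-\rho$ may be negative, so one must argue membership in $S'$ by a different route in each case. When $r\ge\rho$ one has $0\le r-\rho<h$, and $s'$ lies in $S'$ because it is either the multiple $mh$ with $m\in S$ (when $r=\rho$) or a non-multiple of $h$ (when $r>\rho$); when $r<\rho$ one must rewrite $s' = (m-1)h + (h + r-\rho)$ with $1\le h+r-\rho<h$, so that $s'$ is a non-multiple of $h$ and therefore again in $S'$. In both cases one verifies $n - s' = (q-m)h + \rho$ and $1\le s'\le n$, after which the two halves together show that $b(n)$ equals the required mex, completing the induction.
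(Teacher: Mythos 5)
Your proposal is correct and takes essentially the same approach as the paper's own proof: induction on $n$, verifying the mex recursion in two halves, with the identical case split on whether the target quotient equals $G_S(q)$ (handled by a move of size $r-\rho<h$) or is smaller (handled via the mex definition of $G_S(q)$ and a move of size $mh+r-\rho$). If anything, you are more careful than the paper at the wrap-around step where $r<\rho$ and $s'=(m-1)h+(h+r-\rho)$, a case the paper dismisses with ``clearly there exists a move.''
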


\begin{proof}
Let $n=qh+i$ where $0\leq i<h$. Then the formula to be shown is $G_{S'}(qh+i)=G_{S}(q)h+i$. The proof is by induction on $n$ $(=qh+i)$.\\

First, we show that there exists a move to a position with any smaller $\mathcal{G}$-value $rh+k$ than $G_{S}(q)h+i$.
There are two cases.\\

Case that $r=G_{S}(q)$ and $k < i$:\\
Since $0< i-k <h$, there exists a move $gh+i \rightarrow gh+k$ and we have
\begin{align*}
G_{S'}(gh+k)=G_{S}(q)h+k=rh+k
\end{align*}
by induction hypothesis.\\
Case that $r<G_{S}(q)$ and $0\leq k<h$:\\

By the definition of $G_{S}(q)$, there exists $q'$ such that $q-q'\in S$, $G_{S}(q')=r$ and 
\begin{align*}
G_{S'}(q'h+k)=G_{S}(q')h+k=rh+k
\end{align*}
by induction hypothesis.
So we only need to prove that there is move to $q'h+k$.\\
If $i \neq k$, clearly there exists a move $qh+i \rightarrow q'h+k$.\\

Assume that $i = k$ and that there does not exist a move $qh+i \rightarrow q'h+k$. Then 
\begin{align*}
(q-q')h \notin S'\Rightarrow (q-q')h \in (\mathbb{N}_{+}\setminus S)h\Rightarrow q-q' \in (\mathbb{N}_{+} \setminus S)\Rightarrow q-q' \notin S, 
\end{align*}
which is a contradiction.\\
Next, we show that, if $n=qh+i \rightarrow n'=q'h+k$, then 
\begin{center}
$G_{S'}(n')$ $\neq$ $G_{S}(q)h+i$.
\end{center}
If $G_{S'}(n')=G_{S}(q)h+i$, then we have $G_{S}(q)=G_{S}(q')$ and $k=i$ by induction hypothesis, but it is impossible by the definition of $G_{S}(q)$. Because
\begin{align*}
(q-q') \notin S\Rightarrow (q-q') \in (\mathbb{N}_{+}\setminus S)\Rightarrow (q-q')h \in (\mathbb{N}_{+} \setminus S)h\\
\Rightarrow (q-q')h \notin \mathbb{N}_{+}\setminus\{(\mathbb{N}_{+} \setminus S)h\}\Rightarrow n-n'\notin S'.
\end{align*}
\end{proof}

\addcontentsline{toc}{section}{{\bf References}}

The affiliations of the authors.\\
Tomoaki Abuku: University of Tsukuba, Ibaraki, Japan.\\
Masanori Fukui: Hiroshima University, Hiroshima, Japan and Osaka Electro-Communication University, Osaka, Japan.\\
Ko Sakai: University of Tsukuba, Ibaraki, Japan.\\
Koki Suetsugu: Kyoto University, Kyoto, Japan. Presently with National Institute of Informatics, Tokyo, Japan.
\end{document}